\newtheorem {theorem} {Theorem}[section]
\newtheorem {prop} [theorem]{Proposition}
\newtheorem {cory} [theorem]{Corollary}
\newtheorem {lemma}  [theorem]{Lemma}
\numberwithin{equation}{section}
\newcommand{\de}{\delta}
\newcommand{\la}{\lambda}
\newcommand{\si}{\sigma}
\newcommand{\sid}{\sigma_d}
\newcommand{\er}{\varepsilon}
\newcommand{\za}{\zeta}
\newcommand{\we}{\omega}
\newcommand{\ve}{v}
\newcommand{\ph}{\varphi}
\newcommand{\hol}{H}
\newcommand{\ii}{I}
\newcommand{\ff}{\Phi}
\newcommand{\Om}{\Omega}
\newcommand{\spd}{\partial B_d}
\newcommand{\cd}{{\mathbb{C}}^d}
\newcommand{\bd}{B_d}
\newcommand{\Nbb}{\mathbb N}
\newcommand{\Zbb}{\mathbb Z}
\newcommand{\rad}{\mathcal R}
\newcommand{\bloch}{\mathcal B}
\newcommand{\bw}{{\mathcal B}^{\omega}}
\newcommand{\bwp}{{\mathcal B}^{\omega}_p}
\newcommand{\sph}{\partial B}
\begin{document}

\title{Weighted Bloch spaces and quadratic integrals}

\author{Evgueni Doubtsov}

\address{St.~Petersburg Department
of V.A.~Steklov Mathematical Institute, Fontanka 27, St.~Petersburg
191023, Russia}

\email{dubtsov@pdmi.ras.ru}

\thanks{This research was supported by RFBR (grant no.~11-01-00526-a).}

\subjclass[2010]{Primary 32A18; Secondary 32A40}

\date{}

\keywords{Bloch space, radial limit, reverse estimate}

\begin{abstract}
Let $\mathcal{B}^{\omega}(B_d)$ denote the $\omega$-weighted Bloch space
in the unit ball $B_d$
of $\mathbb{C}^d$, $d\ge 1$.
We show that the quadratic integral
\[
\int_x^1 \frac{\omega^2(t)}{t}\, dt,\quad 0<x<1,
\]
governs the radial divergence
and integral reverse estimates in $\mathcal{B}^{\omega}(B_d)$.
\end{abstract}

\maketitle

\section{Introduction}\label{s_int}
Let $\hol(B_d)$ denote the space of holomorphic functions
on the unit ball $B_d$ of $\cd$, $d\ge 1$.

\subsection{Weighted Bloch spaces}\label{ss_growth_df}
Given a gauge function $\we: (0, 1]\to (0, +\infty)$,
the weighted Bloch space $\bw(B_d)$ consists of those $f\in\hol(B_d)$
for which
\begin{equation}\label{e_bw_def}
\|f\|_{\bw(\bd)} = |f(0)| + \sup_{z\in \bd}\frac{|\rad f(z)| (1-|z|)}{\we(1-|z|)}< \infty,
\end{equation}
where
\[
\rad f (z) = \sum_{j=1}^d z_j \frac{\partial f}{\partial z_j}(z), \quad z\in\bd,
\]
is the radial derivative of $f$.
$\bw(\bd)$ is a Banach space with respect to the norm defined by \eqref{e_bw_def}.
If $\we\equiv 1$, then $\bw(\bd)$ is the classical Bloch space $\bloch(\bd)$.
Usually we suppose that the gauge function $\we$ is increasing;
hence, we have $\bw(\bd) \subset \bloch(\bd)$.

The above notation is not completely standard: often the weight $t/ \we(t)$
is attributed to $\bw(\bd)$.

Assuming that $\we$ is sufficiently regular,
we show in the present paper that the quadratic integral
\[
\ii(x) = \ii_\we(x) =\int_x^1 \frac{\we^2(t)}{t}\, dt,\quad 0<x<1,
\]
governs the radial divergence and integral reverse estimates in $\bw(\bd)$.
In both cases, the solutions are based on the classical Hadamard gap series.

\subsection{Radial divergence}\label{ss_raddiv}
Given $f\in\hol(\bd)$ and $\za\in\spd$, we say
that $f$ has a radial limit at $\za$
if there exists a {\it finite}
limit $f^*(\za)= \lim_{r\to 1-} f(r\za)$.

Let $\sid$ denote the normalized Lebesgue measure on the unit sphere $\spd$.
The radial convergence or divergence in $\bw(\bd)$
is described in terms of $\ii(0+)$ by the following dichotomy:

\begin{prop}\label{p_raddiv}
Let $\we: (0, 1]\to (0, +\infty)$ be an increasing function.
\begin{itemize}
\item[(i)]
Let $\ii(0+)<\infty$.
If $f\in\bw(\bd)$, then $f$ has radial limits
$\sid$-almost everywhere.
\item[(ii)]
Let $\ii(0+)=\infty$ and let
$\we(t)/t^{1-\er}$ be decreasing for some $\er>0$.
Then the space $\bw(\bd)$ contains a function
with no radial limits $\sid$-almost everywhere.
\end{itemize}
\end{prop}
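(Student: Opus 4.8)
The plan is to prove the two halves by quite different mechanisms: the convergence statement (i) by embedding $\bw(\bd)$ into the Hardy space $\hol^2(\bd)$ via a square-function estimate, and the divergence statement (ii) by exhibiting an explicit Hadamard gap series whose membership in $\bw(\bd)$ is forced by the regularity of $\we$ and whose failure of radial limits is forced by $\ii(0+)=\infty$.

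For part (i) I would start from the defining pointwise bound $|\rad f(r\za)|\le \|f\|_{\bw(\bd)}\,\we(1-r)/(1-r)$, which upon squaring, multiplying by $(1-r)$, and integrating in $r$ gives, for every $\za\in\spd$,
\[
\int_0^1 |\rad f(r\za)|^2 (1-r)\, dr \le \|f\|_{\bw(\bd)}^2 \int_0^1 \frac{\we^2(1-r)}{1-r}\, dr = \|f\|_{\bw(\bd)}^2\, \ii(0+).
\]
Integrating this radial square function over $\spd$ and expanding $f=\sum_k f_k$ into homogeneous parts, the orthogonality of the $f_k$ in $L^2(\sid)$ together with $\int_0^1 (1-r) r^{2k}\, dr\approx k^{-2}$ yields the Littlewood--Paley comparison $\int_{\spd}\int_0^1 |\rad f|^2(1-r)\,dr\,d\sid \approx \sum_{k\ge 1}\|f_k\|_{L^2(\sid)}^2$. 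Hence $\ii(0+)<\infty$ forces $f\in\hol^2(\bd)$, and the classical Fatou theorem on the ball supplies radial limits $\sid$-almost everywhere. I expect this half to be routine once the square-function identity is in place.

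For part (ii) I would construct a gap series. In the disc ($d=1$) take $n_k=2^k$ and $a_k=\we(2^{-k})$ and set $f(z)=\sum_k a_k z^{n_k}$; for $d\ge 1$ I would replace the monomials by Ryll--Wojtaszczyk homogeneous polynomials $W_{n_k}$ of degree $n_k$ normalized so that $\|W_{n_k}\|_{L^2(\sid)}=1$ and $\|W_{n_k}\|_{L^\infty(\spd)}\le C$, and set $f=\sum_k a_k W_{n_k}$. Since $\rad W_{n_k}=n_k W_{n_k}$ and $|W_{n_k}(z)|\le C|z|^{n_k}$, membership $f\in\bw(\bd)$ reduces in all dimensions to the estimate $\sup_{0<r<1}\frac{1-r}{\we(1-r)}\sum_k n_k a_k r^{n_k}<\infty$.

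The technical heart, and the step I expect to be the main obstacle, is this last estimate. Fixing $r$ and putting $N\approx\log_2\frac{1}{1-r}$, the terms with $k>N$ are controlled using only monotonicity of $\we$ and the super-geometric decay of $r^{2^k}$, while the terms with $k<N$ require the hypothesis that $\we(t)/t^{1-\er}$ be decreasing: it gives $\we(2^{-k})\le\we(2^{-N})\,2^{(N-k)(1-\er)}$, so that $\sum_{k<N} 2^k\we(2^{-k})\lesssim 2^N\we(2^{-N})\approx\we(1-r)/(1-r)$, matching the required bound. Finally, a dyadic comparison shows $\ii(0+)=\sum_k\int_{2^{-k-1}}^{2^{-k}}\frac{\we^2(t)}{t}\,dt\approx\sum_k\we^2(2^{-k})=\sum_k a_k^2$, so $\ii(0+)=\infty$ is equivalent to $\sum_k a_k^2=\infty$. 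For such a gap series the classical lacunary dichotomy, in the form that $\sum_k a_k^2=\infty$ precludes radial limits on a set of positive measure (and its ball analogue for the $W_{n_k}$), then shows that radial limits fail $\sid$-almost everywhere, completing the construction.
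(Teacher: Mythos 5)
Your part (i) is correct, and it takes a mildly different route from the paper: you integrate the defining bound to get a square-function estimate, pass to the homogeneous expansion, and land in the Hardy space $H^2(\bd)$, then invoke Fatou's theorem; the paper instead notes that $\ii(0+)<\infty$ makes $|\rad f(z)|^2(1-|z|)$ a Carleson measure, so $f\in\mathrm{BMOA}(\bd)$. Both are sound, and yours is arguably more self-contained. Your part (ii) for $d=1$ is exactly the paper's construction: $f(z)=\sum_k \we(2^{-k})z^{2^k}$, membership in $\bw(B_1)$ via the split at $N\approx\log_2\frac1{1-r}$ (monotonicity of $\we$ for the tail, the decrease of $\we(t)/t^{1-\er}$ for the head), the dyadic comparison $\ii(0+)\approx\sum_k\we^2(2^{-k})$, and Zygmund's lacunary theorem.

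The genuine gap is in part (ii) for $d\ge 2$. You set $f=\sum_k a_k W_{n_k}$ with Ryll--Wojtaszczyk polynomials and appeal to a ``ball analogue'' of the lacunary dichotomy; no such analogue is available at this level of generality, and this is precisely the difficulty the paper must circumvent. The divergence has to be established slice-wise: for $\za\in\spd$, the slice $f_\za(\la)=\sum_k a_k W_{n_k}(\za)\la^{n_k}$ is a lacunary series with coefficients $a_k W_{n_k}(\za)$, and Zygmund's theorem requires $\sum_k a_k^2|W_{n_k}(\za)|^2=\infty$ at the given $\za$. The Ryll--Wojtaszczyk theorem supplies only the norm bounds $\|W_{n_k}\|_{L^\infty(\spd)}\le C$ and $\|W_{n_k}\|_{L^2(\spd)}\ge\de$; it gives no pointwise lower bound, and for $d\ge 2$ each $W_{n_k}$ vanishes on a nonempty subset of $\spd$. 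Nothing in your argument prevents the sets where the $|W_{n_k}|$ are small from conspiring so that $\sum_k a_k^2|W_{n_k}(\za)|^2<\infty$ on a set of positive $\sid$-measure, in which case $f$ would have radial limits there. The paper closes this hole with a rotation step: since $\sum_k a_k^2\|W[2^k]\|_{L^2(\spd)}^2=\infty$, Rudin's Lemma~7.2.7 yields unitaries $U_k$ of $\cd$ such that $\sum_k a_k^2|W[2^k]\circ U_k(\za)|^2=\infty$ for $\sid$-almost every $\za$; one then takes $f=\sum_k a_k\, W[2^k]\circ U_k$, applies Zygmund on each slice, and finishes with Fubini, the rotations being harmless for membership in $\bw(\bd)$ because $\|W[2^k]\circ U_k\|_{L^\infty(\spd)}=1$. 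Without this randomization over rotations (or some substitute producing a pointwise a.e.\ lower bound along the sequence), your construction does not yield divergence $\sid$-almost everywhere.
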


Remark that the condition $\ii(0+)=\infty$
was previously used by Dyakonov \cite{Dy02}
to construct a non-$\mathrm{BMO}$ function
lying in $\bw(B_1)$ and in all Hardy spaces $H^p(B_1)$,
$0<p<\infty$.

\subsection{Reverse estimates}\label{ss_reve}
Given an unbounded decreasing function $\ve: (0,1]\to (0,+\infty)$,
typical reverse estimates are obtained in the growth space
$\mathcal{A}^{\ve}(\bd)$, which consists
of $f\in\hol(\bd)$ such that $|f(z)|\le C\ve(1-|z|)$
for all $z\in\bd$.
Namely, under appropriate restrictions on $\ve$,
there exists a finite family $\{f_j\}_{j=1}^J \subset \mathcal{A}^{\ve}(\bd)$
such that
\[
|f_1(z)|+\dots +|f_J(z)| \ge C \ve(1-|z|)
\]
for all $z\in\bd$ (see, for example, \cite{AD12} and references therein).

For the weighted Bloch space $\bw(\bd)$,
the following result provides integral reverse estimates related
to the function $\ff^{\frac{1}{2}}(1-|z|)$, $z\in\bd$,
where
\[
\ff(x) = \ff_\we(x) = 1+ \int_x^1 \frac{\we^2(t)}{t}\, dt, \quad 0<x<1.
\]

\begin{theorem}\label{t_reve}
Let $d\in\Nbb$ and let $0<p<\infty$.
Assume that $\we: (0, 1]\to (0, +\infty)$ increases
and $\we(t)/t^{1-\er}$ decreases for some $\er>0$.
Then there exists
a constant $\tau_{d,p,\we}>0$ and
functions $F_{y}\in\bw(\bd)$, $0\le y\le 1$, such that
$\|F_{y}\|_{\bw(\bd)} \le 1$ and
\begin{equation}\label{e_reve}
\int_0^1 |F_{y}(z)|^{2p}\, dy \ge \tau_{d,p,\we}
\ff^{p}({1-|z|})
\end{equation}
for all $z\in\bd$.
\end{theorem}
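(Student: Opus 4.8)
The plan is to realize $\ff^{1/2}(1-|z|)$ as the $\ell^2$-size of the coefficients of a Hadamard-type gap series on $\bd$, and to recover that $\ell^2$-size, uniformly in $z$, by integrating against a randomization parameter $y$. First I would fix a lacunary sequence of integers $n_0=0<n_1<n_2<\cdots$ with $n_{k+1}\ge\la n_k$ for some $\la>1$, together with homogeneous polynomials $W_{n_k}$ on $\cd$ of degree $n_k$ satisfying $\sup_{\xi\in\spn}|W_{n_k}(\xi)|\le 1$ and $\int_{\spn}|W_{n_k}|^2\,d\sid\ge c_0>0$ (Ryll--Wojtaszczyk polynomials; $W_0\equiv 1$). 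Put $a_k=c_1\we(1/n_k)$ with $c_1$ a small constant and $a_0=c_1$. Since $W_{n_k}$ is homogeneous, $\rad W_{n_k}=n_k W_{n_k}$, so for any signs $s_k\in\{\pm 1\}$ and any unitary $U$ the series $g(z)=\sum_k s_k a_k W_{n_k}(Uz)$ obeys $|\rad g(\rho\xi)|\le\sum_k n_k\we(1/n_k)\rho^{n_k}$; using that $\we$ increases and $\we(t)/t^{1-\er}$ decreases, a standard lacunary estimate bounds this by a multiple of $\we(1-\rho)/(1-\rho)$, whence $\|g\|_{\bw(\bd)}\le 1$ once $c_1$ is small. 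Rotations and sign changes do not affect this bound, so the entire family stays in the unit ball of $\bw(\bd)$.

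Next I would introduce the randomization. On $\mathcal U(d)\times\{\pm 1\}^{\Nbb}$, equipped with the product of Haar measure and the symmetric Bernoulli measure $\beta$, set $G(z)=\sum_k s_k a_k W_{n_k}(Uz)$. Because $([0,1],dy)$ is measure-isomorphic to this standard probability space, transporting $G$ along such an isomorphism yields a family $\{F_y\}_{0\le y\le 1}\subset\bw(\bd)$ with $\|F_y\|_{\bw(\bd)}\le 1$ and
\[
\int_0^1|F_y(z)|^{2p}\,dy=\int_{\mathcal U(d)}\int_{\{\pm 1\}^{\Nbb}}\Big|\sum_k s_k a_k W_{n_k}(Uz)\Big|^{2p}\,d\beta(s)\,dU .
\]

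To extract the lower bound, fix $U$ and $z$ and apply Khinchine's inequality in the variables $s_k$,
\[
\int_{\{\pm 1\}^{\Nbb}}\Big|\sum_k s_k a_k W_{n_k}(Uz)\Big|^{2p}\,d\beta(s)\ge A_p\Big(\sum_k a_k^2|W_{n_k}(Uz)|^2\Big)^{p}.
\]
Writing $X(U)=\sum_k a_k^2|W_{n_k}(Uz)|^2\ge 0$ and using homogeneity together with $\int_{\mathcal U(d)}|W_{n_k}(U\xi)|^2\,dU=\int_{\spn}|W_{n_k}|^2\,d\sid\ge c_0$, one finds $\int_{\mathcal U(d)}X(U)\,dU\ge c_0\sum_k a_k^2|z|^{2n_k}$. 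For $p\ge 1$ the convexity of $t\mapsto t^p$ gives $\int X^p\,dU\ge(\int X\,dU)^p$; for $0<p<1$ the required reverse inequality $\int X^p\,dU\gtrsim(\int X\,dU)^p$ follows from the equivalence of all $L^{2p}(\sid)$-norms of the lacunary sums $\sum_k c_k W_{n_k}$, the several-variable analogue of Zygmund's theorem on gap series. Either way $\int_0^1|F_y(z)|^{2p}\,dy\gtrsim\big(\sum_k a_k^2|z|^{2n_k}\big)^{p}$, and the comparison $\sum_k a_k^2|z|^{2n_k}=c_1^2\big(1+\sum_{k\ge 1}\we^2(1/n_k)|z|^{2n_k}\big)\gtrsim\ff(1-|z|)$ is obtained by discretizing $\int_{1-|z|}^1\we^2(t)t^{-1}\,dt$ along the scale $t\approx 1/n_k$: the regularity of $\we$ makes each block comparable to $\we^2(1/n_k)\log\la$, while the factor $|z|^{2n_k}$ merely truncates the sum near $n_k\approx(1-|z|)^{-1}$, matching the lower limit of the integral. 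This gives \eqref{e_reve} with $\tau_{d,p,\we}$ depending on $c_0,c_1,A_p,\la$ and $d$.

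I expect the main obstacle to be the uniformity in $z$ in the lower bound. A single polynomial $W_{n_k}$ vanishes somewhere on $\spn$, so the pointwise Khinchine estimate by itself cannot control $\ff(1-|z|)$ for every $z$; it is precisely the averaging over $\mathcal U(d)$, anchored by $\|W_{n_k}\|_{L^2(\sid)}\ge c_0$, that removes these zeros, and in the range $0<p<1$ this forces the several-variable moment equivalence for lacunary series, which I regard as the technical heart of the argument.
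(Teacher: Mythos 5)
Your construction is correct in outline but takes a genuinely different route from the paper, and one step needs repair. The paper never randomizes over rotations: it invokes Aleksandrov's refinement of the Ryll--Wojtaszczyk theorem (Theorem~\ref{t_ARW}), which provides a \emph{finite} family of polynomials $W_j[n]$, $1\le j\le J(d)$, with $\|W_j[n]\|_{L^\infty(\spn)}\le 1$ and $\max_{1\le j\le J}|W_j[n](\xi)|\ge\de$ for \emph{every} $\xi\in\spn$. It then builds $J$ Rademacher series $F_{j,y}=\sum_k R_k(y)\,\we(2^{-k})W_j[2^k-1]$, applies the Khinchine--Zygmund inequality in $y$ alone, sums over $j$ to exploit the pointwise bound $\sum_j|W_j[2^k-1](z)|^2\ge\de^2(|z|^2)^{2^k-1}$, feeds the result into Lemma~\ref{l_reve}, and finally merges the $J$ families into one by partitioning $[0,1]$ into $J$ intervals. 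You instead use the original Ryll--Wojtaszczyk polynomials (only an $L^2(\sid)$ lower bound) and restore uniformity in $z$ by averaging over $U\in\mathcal U(d)$, using that the Haar average of $|W_n(U\xi)|^2$ equals $\|W_n\|^2_{L^2(\sid)}\ge c_0$ for every $\xi$. That is a legitimate trade: you avoid the deeper input (Aleksandrov's theorem) at the price of a larger randomization space, handled by a measure isomorphism with $[0,1]$, and of having to push the exponent $p$ through the $U$-average. The remaining ingredients --- the normal-weight lacunary bound for the $\bw(\bd)$ norm and the dyadic discretization of $\ff$ --- coincide with the paper's (Section~\ref{ss_div1} and Lemma~\ref{l_reve}; do take $n_k=2^k$, or at least bound the ratios $n_{k+1}/n_k$ from above, since the block comparison with $\int_x^1\we^2(t)t^{-1}dt$ fails for unbounded gaps).

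The step that needs fixing is $0<p<1$. The statement you appeal to --- equivalence of all $L^q(\sid)$-norms of gap series $\sum_k c_kW_{n_k}$ of homogeneous polynomials, as a ``several-variable Zygmund theorem'' --- is false in that generality: already a single monomial defeats it, since
\[
\frac{\|z_1^n\|_{L^4(\si_2)}}{\|z_1^n\|_{L^2(\si_2)}}
=\frac{(n+1)^{1/2}}{(2n+1)^{1/4}}\longrightarrow\infty .
\]
Any valid substitute must use the Ryll--Wojtaszczyk properties, and once you do, no such theorem is needed. Indeed, with $X(U)=\sum_k a_k^2|W_{n_k}(Uz)|^2$ you have the pointwise bound $X(U)\le\sum_k a_k^2|z|^{2n_k}\le c_0^{-1}\int_{\mathcal U(d)}X\,dU$, because $\|W_{n_k}\|_{L^\infty(\spn)}\le1$ while the Haar average of each term is at least $c_0a_k^2|z|^{2n_k}$. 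Hence, for $0<p<1$,
\[
\int X\,dU=\int X^p\,X^{1-p}\,dU
\le\Bigl(c_0^{-1}\!\int X\,dU\Bigr)^{1-p}\int X^p\,dU,
\qquad\text{so}\qquad
\int X^p\,dU\ge c_0^{1-p}\Bigl(\int X\,dU\Bigr)^{p}.
\]
With this two-line argument in place of the cited ``theorem,'' your proof closes for all $0<p<\infty$ and yields \eqref{e_reve} with a constant depending on $c_0$, $p$, $\we$ and the gap ratio, as claimed.
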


For $\we\equiv 1$ and for logarithmic functions $\we$,
the above estimates were obtained in
\cite{DouPAMS} and \cite{Pe13}, respectively.

\subsection{Organization of the paper}
Section~\ref{s_raddiv} is devoted to the radial divergence problem.
In Section~\ref{s_reve}, we prove Theorem~\ref{t_reve}
and we show that estimate \eqref{e_reve} is sharp, up to a multiplicative constant.
Applications of Theorem~\ref{t_reve} are presented in Section~\ref{s_appl}.

\section{Radial divergence}\label{s_raddiv}
Proposition~\ref{p_raddiv}(i) is a known fact.
Indeed, if $\ii(0+)<\infty$ and $f\in\bw(\bd)$,
then $|\rad f(z)|^2 (1-|z|)$ is a Carleson measure,
hence, $f\in\textrm{BMOA}(\bd)$.
In particular, $f$ has radial limits $\sid$-a.e.

\subsection{Proof of Proposition~\ref{p_raddiv}(ii) for $d=1$}\label{ss_div1}
Put
\[
f(z) = \sum_{k=0}^{\infty} \we(2^{-k}) z^{2^k}, \quad z\in B_1.
\]
Standard arguments guarantee that $f\in\bw(B_1)$.
For example, let $t\in (0,1]$ and let $\tau=\frac{1}{t} \ge 1$.
Observe that
\[
\frac{\tau\we(\frac{1}{\tau})}{\tau}\quad\textrm{is a decreasing function of}\ \tau\ge 1,
\]
because $\we(t)$ is increasing. Also,
\[
\frac{\tau\we(\frac{1}{\tau})}{\tau^\er}\quad\textrm{is an increasing function of}\ \tau\ge 1,
\]
because $\we(t)/t^{1-\er}$ is decreasing. Therefore, $\tau\we(\frac{1}{\tau})$, $\tau\ge 1$,
is a normal weight in the sense of \cite{SW71}.
The derivative $f^\prime$ is represented by a Hadamard gap series, hence,
$f\in\bw(B_1)$ (see, e.g., \cite{Pa11}).

Since $\we$ is increasing, we have
\begin{equation}\label{e_we_div}
\sum_{k=0}^\infty \we^2(2^{-k}) \ge \ii(0+) = \infty.
\end{equation}
Thus, $f$ has no radial limits $\si_1$-a.e.
by \cite[Chapter~V, Theorem~6.4]{Zy59}.

\subsection{Proof of Proposition~\ref{p_raddiv}(ii) for $d\ge 2$}\label{ss_div2}

Fix a Ryll--Wojtaszczyk sequence $\{W[n]\}_{n=1}^\infty$ (see \cite{RW83}).
By definition, $W[n]$ is a holomorphic homogeneous polynomial of degree $n$,
$\|W[n]\|_{L^\infty(\spd)} =1$ and
$\|W[n]\|_{L^2(\spd)} \ge\de$ for a universal constant $\de>0$.
In particular, \eqref{e_we_div} guarantees that
\[
\sum_{k=0}^\infty \|\we(2^{-k}) W[2^k]\|^2_{L^2(\spd)} = \infty.
\]
Hence, by \cite[Lemma~7.2.7]{Ru80}, there exists a sequence $\{U_k\}_{k=1}^\infty$
of unitary operators on $\cd$ such that
\begin{equation}\label{e_za_div}
\sum_{k=0}^\infty \we^2(2^{-k}) |W[2^k]\circ U_k (\za)|^2 = \infty.
\end{equation}
for $\sid$-almost all $\za\in\spd$.
Put
\[
f(z) = \sum_{k=0}^{\infty} \we(2^{-k}) W[2^k]\circ U_k (z), \quad z\in \bd.
\]

First, fix a point $\za\in\spd$ with property \eqref{e_za_div}.
Consider the slice-function $f_\za(\la) =f(\la\za)$, $\la\in B_1$.
Note that
\[
f_\za(\la) = \sum_{k=0}^\infty a_k \la^{2^k},\quad \la\in B_1,
\]
where $a_k = \we(2^{-k}) W[2^k]\circ U_k(\za)$.
By \eqref{e_za_div}, we have $\{a_k\}_{k=1}^\infty \notin \ell^2$,
thus, $f_\za$ has no radial limits $\si_1$-a.e.
by \cite[Chapter~V, Theorem~6.4]{Zy59}.
Since the latter property holds for $\sid$-almost all $\za\in\spd$,
Fubini's theorem guarantees that $f$ has no radial limits $\sid$-a.e.

Second, recall that $\|W[2^k]\circ U_k\|_{L^\infty(\spd)} =1$.
So, we deduce that $f\in\bw(\bd)$,
applying the argument from Section~\ref{ss_div1}
to the slice-functions $f_\za$, $\za\in\spd$.
This ends the proof of Proposition~\ref{p_raddiv}.

\subsection{Comments}
\subsubsection{Radial divergence everywhere}
If $\we(0+)>0$, then $\bw(\bd)$
coincides with $\bloch(\bd)$,
hence, $\bw(\bd)$ contains
a function with no radial limits \textit{everywhere}
(see \cite{U88, U94}).
However, if $\we(0+)=0$, then Proposition~\ref{p_raddiv}(ii)
is not improvable in this direction.
Indeed, if $\we(0+)=0$ and $f\in\bw(B_1)$,
then $f$ has radial limits on a set of Hausdorff dimension one (see \cite{Mak89}).

\subsubsection{Hyperbolic setting}
To obtain the hyperbolic analog of $\bw(\bd)$,
replace $\rad f(z)$
by
\[
\frac{\rad \ph(z)}{1-|\ph(z)|^2},
\]
where $\ph: B_n\to B_m$, $m,n\in\Nbb$, is a holomorphic mapping.
The radial limit $\ph^*(\za)$ is defined at $\si_n$-almost every point of $\sph_n$,
hence, it is natural to replace the radial divergence condition by the following property:
$|\ph^*|=1$ $\sid$-a.e., that is, $\ph$ is inner.
While the problem in the hyperbolic setting is more sophisticated,
the following analog of Proposition~\ref{p_raddiv} is known,
at least for $n=m=1$.

\begin{theorem}[{\cite[Theorem~1.1]{Sm98}}, {\cite[Theorem~5.1]{AAN99}}]\label{t_aan_smith}
Let $\we: (0, 1]\to (0, +\infty)$ be an increasing function.
\begin{itemize}
\item[(i)]
Assume that $\ii(0+)<\infty$ and
$\ph: B_1\to B_1$ is a holomorphic function such that
\[
\frac{|\ph^\prime(z)| (1-|z|)}{1-|\ph(z)|} \le \we(1-|z|), \quad z\in B_1.
\]
Then $\ph$ is not inner.
\item[(ii)]
Assume that $\ii(0+)=\infty$ and
$\we(t)/t^{1-\er}$ decreases for some $\er>0$.
Then there exists an inner function $\ph: B_1\to B_1$ such that
\[
\frac{|\ph^\prime(z)| (1-|z|)}{1-|\ph(z)|} \le \we(1-|z|), \quad z\in B_1.
\]
\end{itemize}
\end{theorem}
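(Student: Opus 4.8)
The plan is to transfer both statements to the weighted Bloch space $\bw(B_1)$ and then invoke Proposition~\ref{p_raddiv}, the dichotomy already in hand for $\bw$. The bridge in both directions is the elementary observation that the left-hand side of the hypothesis is, up to a bounded factor, the Schwarz--Pick (hyperbolic) derivative of $\ph$: since $\frac{1-|z|^2}{1-|\ph(z)|^2}\le 2\,\frac{1-|z|}{1-|\ph(z)|}$, the assumption $\frac{|\ph'(z)|(1-|z|)}{1-|\ph(z)|}\le\we(1-|z|)$ is equivalent, up to the constant $2$, to $\frac{|\ph'(z)|(1-|z|^2)}{1-|\ph(z)|^2}\le 2\,\we(1-|z|)$. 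I will use this reformulation throughout.

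For part~(i) I would fix once and for all a lacunary Bloch function $g\in\bloch(B_1)$ that has no radial limit $\si_1$-a.e.\ (e.g.\ $g(w)=\sum_k w^{2^k}$, which is Bloch while its coefficients fail to lie in $\ell^2$), and set $F=g\circ\ph$. The chain rule together with the reformulation above gives
\[
|F'(z)|(1-|z|^2) = |g'(\ph(z))|\,(1-|\ph(z)|^2)\cdot\frac{|\ph'(z)|(1-|z|^2)}{1-|\ph(z)|^2}\le 2\,\|g\|_{\bloch}\,\we(1-|z|),
\]
so $F\in\bw(B_1)$. Since $\ii(0+)<\infty$, Proposition~\ref{p_raddiv}(i) forces $F$ to have finite radial limits $\si_1$-a.e. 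Assuming toward a contradiction that $\ph$ is inner (after a M\"obius normalisation we may take $\ph(0)=0$), the classical measure-preserving property of inner functions shows that the boundary map $\ph^*$ pushes $\si_1$ forward to $\si_1$, so $\ph^*(\za)$ lands $\si_1$-a.e.\ at a point where $g$ has no radial limit. One then has to convert this into the statement that $g\circ\ph$ itself has no radial limit $\si_1$-a.e., contradicting the previous step and proving that $\ph$ cannot be inner.

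For part~(ii) I would code an inner function by a $\bw$-function through the Cayley transform. Writing $\ph=\frac{G-1}{G+1}$ with $G:B_1\to\{\mathrm{Re}>0\}$, a direct computation gives the exact identity
\[
\frac{|\ph'(z)|(1-|z|^2)}{1-|\ph(z)|^2}=\frac{|G'(z)|(1-|z|^2)}{2\,\mathrm{Re}\,G(z)},
\]
and $\ph$ is inner precisely when $\mathrm{Re}\,G^*=0$ (or $G^*=\infty$) $\si_1$-a.e. Putting $G=e^{H}$ with $H$ valued in the strip $\{|\mathrm{Im}|<\pi/2\}$ turns the right-hand side into $\frac{|H'(z)|(1-|z|^2)}{2\cos(\mathrm{Im}\,H(z))}$, so as long as $H$ stays in a slightly narrower strip the required bound is exactly $H\in\bw(B_1)$, while the innerness of $\ph$ becomes the demand that $\mathrm{Im}\,H^*=\pm\pi/2$ $\si_1$-a.e. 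Thus it suffices to produce, under $\ii(0+)=\infty$ and the regularity $\we(t)/t^{1-\er}\downarrow$, a function $H\in\bw(B_1)$ mapping into the strip whose radial boundary values are pinned to the two edges almost everywhere. I would build $H$ from the Hadamard gap series of the proof of Proposition~\ref{p_raddiv}(ii), arranged after a real normalisation to map $B_1$ into the strip; the divergence $\sum_k\we^2(2^{-k})\ge\ii(0+)=\infty$ is exactly what forces the lacunary boundary values to saturate the edges $\si_1$-a.e.

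The main obstacle is, in both parts, the passage from a boundary statement \emph{about the outer variable} to one \emph{along the orbit of $\ph$}. In (i) this is the step upgrading ``$g$ has no radial limit at $\ph^*(\za)$'' to ``$g\circ\ph$ has no radial limit at $\za$'': the curve $r\mapsto\ph(r\za)$ reaches $\ph^*(\za)$ along a generally non-radial, possibly tangential path, so a crude transfer fails and one must control the shape of these orbits (this is where the harmonic-measure / symmetric-measure machinery of \cite{AAN99} and the hyperbolic estimates of \cite{Sm98} enter). In (ii) the analogous difficulty is quantitative: mere divergence of $\sum_k\we^2(2^{-k})$ yields only absence of radial limits, whereas pinning a strip-valued lacunary series to $\mathrm{Im}=\pm\pi/2$ a.e.\ requires sharp two-sided control of the series, which is precisely where the normal-weight hypothesis $\we(t)/t^{1-\er}\downarrow$ must be exploited.
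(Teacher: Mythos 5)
The paper does not prove this statement at all: Theorem~\ref{t_aan_smith} is quoted as a known result of Smith \cite{Sm98} and Aleksandrov--Anderson--Nicolau \cite{AAN99}, so there is no internal proof to compare with, and your sketch must stand on its own. For part (i) your reduction is the right shape (compose a lacunary Bloch function $g$ with no radial limits a.e.\ with $\ph$, note $g\circ\ph\in\bw(B_1)$ by the chain rule and the hypothesis, and invoke Proposition~\ref{p_raddiv}(i)), but you leave open precisely the step that carries the substance: upgrading ``$g$ has no radial limit at $\ph^*(\za)$'' to ``$g\circ\ph$ has no radial limit at $\za$.'' This gap is genuinely closable, but you name no mechanism for it. The standard one is normality: Bloch functions are normal, and the Lehto--Virtanen theorem says that if a normal function has an asymptotic value along \emph{any} curve terminating at a boundary point, it has the nontangential limit there. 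Since $\ph$ inner gives $|\ph^*(\za)|=1$ a.e., the curve $r\mapsto\ph(r\za)$ terminates at $\ph^*(\za)$, so a radial limit of $g\circ\ph$ at $\za$ would force a radial limit of $g$ at $\ph^*(\za)$; measure-preservation of $\ph^*$ then yields the contradiction. Without this (or an equivalent) ingredient, your proof of (i) is incomplete rather than wrong.

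Part (ii) has a more serious defect: the strip scheme is internally inconsistent. Your two requirements --- that $H$ stay in a \emph{strictly narrower} strip $\{|\mathrm{Im}\,H|\le\pi/2-\de\}$ (so that $\cos(\mathrm{Im}\,H)$ is bounded below and the bound reduces to $H\in\bw(B_1)$) and that $\mathrm{Im}\,H^*=\pm\pi/2$ a.e.\ (innerness) --- exclude each other: radial limits of a function valued in the narrower strip satisfy $|\mathrm{Im}\,H^*|\le\pi/2-\de$ a.e. If instead $H$ fills the whole strip, then $|H'(z)|/\cos(\mathrm{Im}\,H(z))$ is exactly the hyperbolic derivative of $H$ into the strip, so the ``required bound'' is a condition of the same hyperbolic nature as the one you started from, and the reduction is circular. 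Moreover your closing claim --- that $\sum_k\we^2(2^{-k})=\infty$ ``forces the lacunary boundary values to saturate the edges'' --- is false: by the very result you use in (i), that divergence yields a.e.\ \emph{nonexistence} of radial limits (unbounded oscillation), not a.e.\ convergence of $\mathrm{Im}\,H$ to $\pm\pi/2$; and a Hadamard gap series with coefficients outside $\ell^2$ cannot even be strip-valued, since its imaginary part is itself a lacunary series with non-$\ell^2$ coefficients and hence a.e.\ unbounded, so some conformal re-mapping would be needed, destroying the derivative bound. The actual constructions in \cite{Sm98} and \cite{AAN99} rest on different machinery (dyadic-martingale and symmetric-measure arguments), which your sketch does not replace.
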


In Section~\ref{ss_hyd},
we apply Theorem~\ref{t_reve} to obtain quantitative
versions of Theorem~\ref{t_aan_smith}(i).

\section{Reverse estimates}\label{s_reve}

\subsection{Auxiliary results}\label{ss_arw}
\begin{lemma}\label{l_reve}
Let $\we: (0, 1]\to (0, +\infty)$ be an increasing function. Put
\[
\Psi(r) = \sum_{k=0}^\infty \we^2(2^{-k}) r^{2^k -1}, \quad 0\le r <1.
\]
Then $\Psi(r) \ge C \ff(1-r)$ for a constant $C=C_\we >0$.
\end{lemma}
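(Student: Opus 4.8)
The plan is to bound the series $\Psi(r)$ below by comparing it with the integral defining $\ff$. Since $\we$ is increasing, the natural strategy is to show that each integral piece $\int_{2^{-(k+1)}}^{2^{-k}} \frac{\we^2(t)}{t}\,dt$ is controlled by a corresponding term $\we^2(2^{-k}) r^{2^k-1}$ of the series, at least when $r$ is chosen in relation to the index $k$. First I would fix $r\in[0,1)$ and let $N=N(r)$ be the integer with $2^{-N} \approx 1-r$, say the largest $k$ with $2^{-k}\ge 1-r$. The terms $r^{2^k-1}$ with $2^k \le \frac{1}{1-r}$ stay bounded below by a universal constant (since $(1-\er)^{1/\er}$-type estimates give $r^{m}\ge c$ when $m(1-r)\le 1$), so the ``low'' part of the series dominates $\sum_{k\le N}\we^2(2^{-k})$.

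Next I would relate this partial sum to the integral. Because $\we$ is increasing, on each dyadic interval $[2^{-(k+1)},2^{-k}]$ we have $\we(t)\le \we(2^{-k})$, and the interval has logarithmic length $\int_{2^{-(k+1)}}^{2^{-k}} \frac{dt}{t}=\log 2$. Hence
\[
\int_{2^{-(k+1)}}^{2^{-k}} \frac{\we^2(t)}{t}\,dt \le (\log 2)\,\we^2(2^{-k}).
\]
Summing over $0\le k\le N$ telescopes the integrals into $\int_{2^{-(N+1)}}^{1}\frac{\we^2(t)}{t}\,dt$, which is comparable to $\int_{1-r}^1 \frac{\we^2(t)}{t}\,dt = \ff(1-r)-1$ up to the single dyadic block containing $1-r$. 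Combining with the lower bound on $r^{2^k-1}$ from the previous step yields
\[
\Psi(r)\ge \sum_{k=0}^N \we^2(2^{-k})\,r^{2^k-1} \ge c\sum_{k=0}^N \we^2(2^{-k}) \ge \frac{c}{\log 2}\bigl(\ff(1-r)-1\bigr),
\]
and absorbing the additive constant (using $\we>0$, so $\Psi(r)\ge \we^2(1)>0$ uniformly) gives $\Psi(r)\ge C\ff(1-r)$.

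The main obstacle I anticipate is the uniform lower bound $r^{2^k-1}\ge c>0$ for the relevant range of $k$, together with handling the regime where $1-r$ is not close to a dyadic point. One must verify that the cutoff $N(r)$ is chosen so that simultaneously (a) the exponents $2^k-1$ stay $O(1/(1-r))$, keeping $r^{2^k-1}$ bounded away from zero via the elementary inequality $(1-x)^{1/x}\ge e^{-2}$ for small $x$, and (b) the truncated integral $\int_{2^{-(N+1)}}^{1}$ still captures a fixed fraction of $\ff(1-r)$. Care is needed near $r=0$, where only finitely many terms survive, but there $\ff(1-r)$ is bounded and the inequality holds trivially by positivity of $\we$. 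I do not expect to need the extra hypothesis that $\we(t)/t^{1-\er}$ decreases; the increasing assumption on $\we$ alone should suffice for this lemma.
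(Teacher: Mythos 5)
Your proposal is correct and takes essentially the same route as the paper's proof: fix the dyadic index $N$ with $2^{-N-1} < 1-r \le 2^{-N}$, truncate the series there, bound $r^{2^k-1}$ below by a universal constant for $k\le N$, compare the partial sum $\sum_{k=0}^N \we^2(2^{-k})$ with $\int_{1-r}^1 \we^2(t)t^{-1}\,dt$ block by block using that $\we$ is increasing, and absorb the additive constant in $\ff$ via the $k=0$ term $\we^2(1)$. The paper compresses exactly these steps into one chain of inequalities, and, as you correctly anticipated, it uses only the monotonicity of $\we$ (the hypothesis that $\we(t)/t^{1-\er}$ decreases is not needed for this lemma).
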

\begin{proof}
Let $2^{-n-1} \le 1-r < 2^{-n}$ for some $n\in\Zbb_+$. Then
\begin{align*}
2\Psi(r)
&\ge 2\we^2(1) + \sum_{k=1}^n \we^2(2^{-k}) \left(1- 2^{-n} \right)^{2^k -1}
\\
&\ge \we^2(1) + \frac{1}{e}\sum_{k=0}^n \we^2(2^{-k}) \ge C\ff(2^{-n-1})
\ge C\ff(1-r),
\end{align*}
since $\we$ is increasing and $\ff$ is decreasing.
\end{proof}

Also, we need the following improvement of the
Ryll--Wojtaszczyk theorem used in Section~\ref{ss_div2}.

\begin{theorem}[{\cite[Theorem~4]{Aab86}}]\label{t_ARW}
Let $d\in\Nbb$. Then there exist $\de=\de(d)\in (0,1)$ and
$J=J(d)\in\Nbb$ with the following property: For every $n\in\Nbb$,
there exist holomorphic homogeneous polynomials $W_{j}[n]$ of degree
$n$, $1\le j \le J$, such that
\begin{align}
 \|W_{j}[n]\|_{L^\infty(\sph_d)}
&\le 1\quad \textrm{and}\label{e_Linfty_ab} \\
 \max_{1\le j\le J} |W_{j}[n](\xi)|
&\ge \de\quad \textrm{for all\ } \xi\in\sph_d.\label{e_minmax_ab}
\end{align}
\end{theorem}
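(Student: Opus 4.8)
The plan is to deduce the finite covering family from the single Ryll--Wojtaszczyk polynomial used in Section~\ref{ss_div2}, together with a rotation-averaging identity, thereby reducing the statement to a discretization problem on the unitary group. Fix the basic Ryll--Wojtaszczyk polynomial $W[n]$, homogeneous of degree $n$, with $\|W[n]\|_{L^\infty(\spd)}\le 1$ and $\|W[n]\|_{L^2(\spd)}\ge\de_0$, where $\de_0=\de_0(d)>0$ (see \cite{RW83}). For every $\xi\in\spd$, the unitary invariance of $\sid$ gives
\[
\int_{\mathcal U(d)} |W[n](U\xi)|^2\,dU = \|W[n]\|_{L^2(\spd)}^2 \ge \de_0^2,
\]
where $dU$ is normalized Haar measure, since $U\xi$ is $\sid$-distributed as $U$ runs over $\mathcal U(d)$; this is the quadrature principle underlying \cite[Lemma~7.2.7]{Ru80}. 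Hence the pointwise average of the rotates $|W[n]\circ U|^2$ is a constant bounded below independently of $n$ and $\xi$. If I can select unitaries $U_1,\dots,U_J$, with $J$ depending only on $d$, such that $\frac1J\sum_{j=1}^J |W[n](U_j\xi)|^2 \ge \de_0^2/2$ for all $\xi\in\spd$, then putting $W_j[n]=W[n]\circ U_j$ settles the theorem: each $W_j[n]$ is homogeneous of degree $n$ with $\|W_j[n]\|_{L^\infty(\spd)}=\|W[n]\|_{L^\infty(\spd)}\le1$, which is \eqref{e_Linfty_ab}, while $\max_j|W_j[n](\xi)|^2\ge\frac1J\sum_j|W_j[n](\xi)|^2\ge\de_0^2/2$ yields \eqref{e_minmax_ab} with $\de=\de_0/\sqrt2$.

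The natural way to obtain such $U_1,\dots,U_J$ is the probabilistic method. Choosing the $U_j$ independently and Haar-distributed, for each fixed $\xi$ the quantity $h_n(\xi)=\frac1J\sum_j|W[n](U_j\xi)|^2$ is an average of $J$ independent random variables taking values in $[0,1]$ whose common mean is $\|W[n]\|_{L^2(\spd)}^2\ge\de_0^2$; by a Hoeffding-type bound, $h_n(\xi)<\de_0^2/2$ holds with probability at most $2\exp(-J\de_0^4/2)$. To pass from individual points to all of $\spd$ I would invoke a Bernstein--Markov estimate: a homogeneous polynomial of degree $n$ bounded by $1$ on $\spd$ has tangential gradient $O(n)$ there, so $h_n$ is Lipschitz with constant $O(n)$, and it suffices to control $h_n$ on a $\varepsilon$-net of $\spd$ with $\varepsilon\asymp 1/n$. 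A union bound over such a net, which has cardinality $\asymp n^{2d-1}$, forces $h_n\ge\de_0^2/2$ everywhere once $2\exp(-J\de_0^4/2)\cdot n^{2d-1}<1$.

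The main obstacle is precisely that this last inequality only yields $J\asymp(2d-1)\de_0^{-4}\log n$, which depends on the degree, whereas the theorem demands $J=J(d)$ uniform in $n$. Removing the logarithmic factor is the heart of the matter and is exactly what \cite{Aab86} achieves over the plain Ryll--Wojtaszczyk theorem. The net/union-bound argument is wasteful because it treats $\{h_n<\de_0^2/2\}$ as an arbitrary Lipschitz sublevel set; the extra structure I would exploit is the uniform flatness of the Ryll--Wojtaszczyk polynomials, namely $\|W[n]\|_{L^p(\spd)}\asymp\|W[n]\|_{L^2(\spd)}$ for each fixed $p$, which prevents $|W[n]|^2$ from concentrating. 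The aim would then be to show that a fixed number of independent rotations cover the sphere directly---covering the thick superlevel set $\{|W[n]|\ge\de\}$ rather than the net points, or repairing the small exceptional set by an alteration step after a constant number of choices---so that the $n$-dependent net count is replaced by a constant. I expect this uniformity step, not the reduction of the first paragraph, to carry essentially all of the difficulty.
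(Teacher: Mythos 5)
Your first two paragraphs are essentially correct as far as they go: the rotation-averaging identity (for fixed $\xi\in\spd$, the Haar-distributed rotate $U\xi$ has law $\sid$, so the mean of $|W[n](U\xi)|^2$ is $\|W[n]\|_{L^2(\spd)}^2\ge\de_0^2$), the Hoeffding bound for a fixed point, the Bernstein-type Lipschitz estimate $O(n)$ for $|W[n]|^2$ on the sphere, and the union bound over a $1/n$-net of cardinality $\asymp n^{2d-1}$. But, as you yourself concede, this machinery only delivers $J\asymp\de_0^{-4}\,d\log n$, and the uniformity of $J$ in the degree $n$ is not a technical refinement --- it is the \emph{entire} content of Theorem~\ref{t_ARW} beyond the original Ryll--Wojtaszczyk theorem \cite{RW83}. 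Your third paragraph names this obstacle but replaces the argument with intentions (``the aim would then be to show\dots'', ``I expect this uniformity step\dots to carry essentially all of the difficulty''). Nothing in the sketch shows that the $L^p$-flatness of $|W[n]|$ prevents the low sets $\{|W[n]\circ U_j|<\de\}$ of a \emph{bounded} number of rotates from having common points, and no ``alteration step'' is actually formulated; for $d\ge 2$ every rotate vanishes on a real hypersurface of $\spd$, and controlling the intersection of $J$ such neighborhoods uniformly in $n$ is exactly where a new idea is needed. So the proposal contains a genuine gap: it reduces the theorem to a statement it does not prove.

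For calibration: the paper itself offers no proof of this statement --- it is imported verbatim from Aleksandrov \cite[Theorem~4]{Aab86}, where it underlies the construction of proper holomorphic maps from the ball to the polydisk. Aleksandrov's argument is constructive and does not proceed by taking finitely many random rotates of a single Ryll--Wojtaszczyk polynomial, so the missing uniformity step cannot be discharged by gesturing at ``what \cite{Aab86} achieves over the plain Ryll--Wojtaszczyk theorem''; that is precisely the part being assumed. If you want a genuinely probabilistic route, you would at minimum have to replace the net/union-bound scheme by an argument exploiting the structure of the exceptional set (e.g.\ covering the superlevel set $\{|W[n]|\ge\de_0/2\}$, whose $\sid$-measure is bounded below by $c(\de_0)>0$ independently of $n$, in a way that upgrades measure coverage to pointwise coverage), and no such mechanism is exhibited.
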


Probably, it is worth mentioning that $J(1)=1$.

\subsection{Proof of Theorem~\ref{t_reve}}\label{ss_bloch_rvs}
Let the constant $\de\in (0,1)$ and the
polynomials $W_{j}[n]$, $1\le j \le J$, $n\in\Nbb$, be those
provided by Theorem~\ref{t_ARW}.

For each non-dyadic $y\in [0,1]$, consider the following functions:
\[
F_{j, y} (z) = \sum_{k=0}^\infty R_k(y) \we(2^{-k}) W_j [2^k -1](z), \quad z\in
B_d,\ 1\le j \le J,
\]
where
\[
R_k (y) = \mathrm{sign}\sin (2^{k+1}\pi y), \quad y\in [0,1],
\]
is the Rademacher function.

First, arguing as in Section~\ref{s_raddiv} and using estimate~\eqref{e_Linfty_ab},
we deduce that
\linebreak
$\|F_{j,y}\|_{\bw(\bd)} \le C$.

Second, we obtain
\[
C_p \int_0^1
|F_{j,y}(z)|^{2p}\, dy
\ge
\left(\sum_{k=0}^\infty |\we(2^{-k}) W_j[2^k-1](z)|^2
\right)^{p}
\]
by \cite[Chapter~V, Theorem~8.4]{Zy59}.
Given positive numbers $a_j$, $1\le j \le J=J(d)$, we have
\[
 \left( \sum_{j=1}^{J} a_j \right)^{p}
\le C_{d, p} \sum_{j=1}^{J} a_j^{p}.
\]
Hence,
\[
C_{d, p} \sum_{j=1}^{J}
 \int_0^1 |F_{j,y}(z)|^{2p}\, dy
\ge
\left(\sum_{k=0}^\infty \sum_{j=1}^{J} \we^2(2^{-k})|W_j[2^k-1](z)|^2
 \right)^{p}.
\]
Since $W_j[2^k-1]$, $1\le j \le J$, are homogeneous polynomials of degree $2^k-1$, we obtain
\begin{align*}
 \sum_{k=0}^\infty \sum_{j=1}^{J} \we^2(2^{-k}) |W_j[2^k-1](z)|^2
&\ge \de^2 \sum_{k=0}^\infty \we^2(2^{-k}) |z|^{2^{k+2}-2} \\
&\ge \de^2 C_\we \ff(1-|z|^2), \quad z\in B_d,
\end{align*}
by~(\ref{e_minmax_ab}) and Lemma~\ref{l_reve} with $r=|z|^2$.
So,
\[
C_{d,p} \sum_{j=1}^J \int_0^1 |F_{j,y}(z)|^{2p}\, dy \ge
 \left( \de^2 C_\we \ff(1-|z|^2) \right)^{p},\quad z\in B_d.
\]
Changing the indices of the functions $F_{j,y}$
and using a new variable of integration,
we may reduce the above sum of integrals to one integral over $[0,1]$.
So, it remains to verify that $C \ff(1-r^2)\ge \ff(1-r)$, $0\le r <1$.

First, if $0\le r \le \frac{2}{3}$, then $\ff(1-r) \le C_\we \le C_\we\ff(1-r^2)$
for a constant $C_\we >0$. Second, if $0<\er < \frac{1}{3}$, then
$\ff(\er) - \ff(2\er)\le \we^2(2\er) \le 3\ff(2\er)$,
because $\we$ is increasing.
Thus $\ff(1-r) \le 4\ff(1-r^2)$ for $\frac{2}{3} < r<1$.

The proof of Theorem~\ref{t_reve} is finished.

\subsection{Integral means}
To show that inequality~\eqref{e_reve} is sharp,
we estimate the integral means
\[
M_p(f, r) = \left( \int_{\spd} |f(r\za)|^p \, d\sid(\za) \right)^{\frac{1}{p}}, \quad 0<r<1,
\]
for the functions $f\in\bw(\bd)$.

For $\we\equiv 1$, the following result was obtained in
\cite{CmG84} and \cite{Mak85}.

\begin{prop}\label{p_means}
Let $0<p<\infty$ and let $f\in\bw(\bd)$.
Then
\begin{equation}\label{e_dire}
M_p(f, r)
\le C\|f\|_{\bw(\bd)} \ff^{\frac{1}{2}}(1-r), \quad 0<r<1,
\end{equation}
for a constant $C>0$.
\end{prop}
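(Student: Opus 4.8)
The plan is to reduce to even integer exponents and then run an induction driven by a Littlewood--Paley identity. Since $(\spd,\sid)$ is a probability space, H\"older's inequality gives $M_p(f,r)\le M_{2m}(f,r)$ whenever $p\le 2m$, so it suffices to prove, for each $m\in\Nbb$,
\[
M_{2m}^{2m}(f,r)\le C_m\|f\|_{\bw(\bd)}^{2m}\,\ff^{m}(1-r),\quad 0<r<1,
\]
and then to take any even integer $2m\ge p$; extracting $2m$-th roots yields \eqref{e_dire}. I would prove the displayed inequality by induction on $m$, the case $m=0$ being the trivial identity $M_0^0\equiv1=\ff^0$.

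The engine is the identity, valid for every $h\in\hol(\bd)$,
\[
M_2^2(h,r)=|h(0)|^2+4\int_0^r\frac{M_2^2(\rad h,u)}{u}\log\frac ru\,du,
\]
which follows from the $L^2(\spd)$-orthogonality of the homogeneous components $h_n$ of $h$ together with the elementary evaluation $\int_0^r u^{2n-1}\log\frac ru\,du=\frac{r^{2n}}{4n^2}$; writing $M_2^2(h,r)=\sum_n\|h_n\|_{L^2(\spd)}^2r^{2n}$ and $M_2^2(\rad h,u)=\sum_n n^2\|h_n\|_{L^2(\spd)}^2u^{2n}$ makes the two sides match termwise. I would apply this to $h=f^m$. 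Using $\rad(f^m)=mf^{m-1}\rad f$ and the pointwise bound $|\rad f(z)|\le\|f\|_{\bw(\bd)}\,\we(1-|z|)/(1-|z|)$ read off from \eqref{e_bw_def}, I get
\[
M_2^2(\rad h,u)\le m^2\|f\|_{\bw(\bd)}^2\,\frac{\we^2(1-u)}{(1-u)^2}\,M_{2m-2}^{2m-2}(f,u).
\]
Inserting the induction hypothesis for $M_{2m-2}^{2m-2}$ reduces the inductive step to the single scalar estimate
\[
\int_0^r\frac{\we^2(1-u)}{u(1-u)^2}\left(\log\frac ru\right)\ff^{m-1}(1-u)\,du\le C_m\,\ff^{m}(1-r).
\]

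To prove this last estimate I would split the integral at $u=\tfrac12$. On $(\tfrac12,r)$ I use $\log\frac ru\le\frac{r-u}{u}\le 2(r-u)$ and substitute $t=1-u$, $x=1-r$; since $\frac{r-u}{(1-u)^2}=\frac{t-x}{t^2}\le\frac1t$, the integrand is dominated by $C\,\frac{\we^2(t)}{t}\ff^{m-1}(t)$. The crucial observation is that $\ff'(t)=-\we^2(t)/t$, so $\frac{\we^2(t)}{t}\ff^{m-1}(t)=-\tfrac1m\frac{d}{dt}\ff^{m}(t)$ and the integral telescopes to $\tfrac1m\big(\ff^{m}(x)-\ff^{m}(1)\big)\le\tfrac1m\ff^{m}(1-r)$. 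The contribution of $(0,\tfrac12)$ requires care, because the crude pointwise bound on $\rad h$ is \emph{not} integrable against $u^{-1}\log\frac ru$ near the origin; this is repaired by exploiting that the coefficients are nonnegative, whence $M_2^2(\rad h,u)=\sum_{n\ge1}n^2\|h_n\|_{L^2(\spd)}^2u^{2n}\le 4u^2M_2^2(\rad h,\tfrac12)$ for $u\le\tfrac12$, and $M_2^2(\rad h,\tfrac12)\le C_m\|f\|_{\bw(\bd)}^{2m}$ by the pointwise bound and the induction hypothesis at $u=\tfrac12$. Thus this part is bounded by $C_m\|f\|_{\bw(\bd)}^{2m}\int_0^{1/2}u\log\frac1u\,du\le C_m\|f\|_{\bw(\bd)}^{2m}\le C_m\|f\|_{\bw(\bd)}^{2m}\ff^{m}(1-r)$, since $\ff\ge1$. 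This closes the induction.

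The main obstacle is arranging the induction so that the power of $\ff$ increases by \emph{exactly} one at each step, with no accumulating logarithmic loss. This is precisely what forces applying the $L^2$ identity to $f^m$ (so that only $M_{2m-2}^{2m-2}$, and never an odd moment or a sup-norm, feeds back in) and what makes the telescoping through $\ff'=-\we^2/t$ indispensable. The secondary technical point, easy to overlook, is the nonintegrable behaviour of the naive bound near $u=0$, handled above by the monotonicity refinement $M_2^2(\rad h,u)\le 4u^2M_2^2(\rad h,\tfrac12)$; the justification of the Littlewood--Paley identity itself is then routine given the nonnegativity of the coefficients $\|h_n\|_{L^2(\spd)}^2$.
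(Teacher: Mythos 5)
Your proposal is correct, but it takes a genuinely different route from the paper. The paper's proof is essentially two lines resting on a cited external result, the Ahern--Bruna square-function estimate \cite[Theorem~3.1]{AB88}:
\[
M_p(f, r) \le C|f(0)|
+ C \left(\int_{\spd} \left( \int_0^1 r^2|\rad f(r t\za)|^2 (1-t)\, dt\right)^{\frac{p}{2}}
\,d\sid(\za) \right)^{\frac{1}{p}},
\]
after which the pointwise bound $|\rad f(z)|\le\|f\|_{\bw(\bd)}\we(1-|z|)/(1-|z|)$ makes the inner integral $\le\|f\|^2_{\bw(\bd)}\ff(1-r)$ uniformly in $\za$, and the proof is over. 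You instead reduce to even exponents $2m\ge p$ (legitimate, since $\sid$ is a probability measure) and run a self-contained induction on $m$ via the exact $L^2$ Littlewood--Paley identity applied to $h=f^m$, with the telescoping $\we^2(t)\ff^{m-1}(t)/t=-\frac 1m\frac{d}{dt}\ff^m(t)$ closing the inductive step; your treatment of the non-integrable singularity at $u=0$ (via $M_2^2(\rad h,u)\le 4u^2M_2^2(\rad h,\frac12)$ for $u\le\frac12$) is the right fix and is needed. What your approach buys: it is elementary and dimension-free (no area-integral theorem is invoked), and it is in fact the natural $\we$-weighted generalization of the classical even-moment argument of Clunie--MacGregor and Makarov, which the paper cites (\cite{CmG84}, \cite{Mak85}) precisely for the unweighted case $\we\equiv 1$. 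What it costs is length and the detour through integer moments. Two cosmetic remarks: the telescoped integral over $t\in(1-r,\frac12)$ equals $\frac1m\bigl(\ff^m(1-r)-\ff^m(\frac12)\bigr)$ rather than $\frac1m\bigl(\ff^m(1-r)-\ff^m(1)\bigr)$ (either expression gives the bound you need, the latter being the integral extended to $t\in(1-r,1)$); and the telescoping, with its mild absolute-continuity justification, can be avoided altogether by using that $\ff$ is decreasing, since
\[
\int_{1-r}^{1}\frac{\we^2(t)}{t}\,\ff^{m-1}(t)\, dt
\le \ff^{m-1}(1-r)\int_{1-r}^{1}\frac{\we^2(t)}{t}\, dt
\le \ff^{m}(1-r).
\]
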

\begin{proof}
For $f\in\hol(\bd)$ and $0<r<1$, we have
\[
M_p(f, r) \le C|f(0)|
+ C \left(\int_{\spd} \left( \int_0^1 r^2|\rad f(r t\za)|^2 (1-t)\, dt\right)^{\frac{p}{2}}
\,d\sid(\za) \right)^{\frac{1}{p}}
\]
for a constant $C>0$; see, for example, \cite[Theorem~3.1]{AB88}.
If $f\in\bw(\bd)$, then, using the defining property \eqref{e_bw_def},
we obtain
\[
\begin{split}
\int_0^1 r^2 |\rad f(r t\za)|^2 (1-t)\, dt
&= \int_0^r |\rad f(t\za)|^2 (r-t)\, dt
\\
&\le \|f\|^2_{\bw(\bd)} \int_0^r \frac{\we^2(1-t)}{1-t}\, dt
\le\|f\|^2_{\bw(\bd)}\ff(1-r).
\end{split}
\]
Since $|f(0)|\le \|f\|_{\bw(\bd)}$,
in sum we obtain the required estimate.
\end{proof}

Comparing Proposition~\ref{p_means} and Theorem~\ref{t_reve},
we conclude that the direct estimate \eqref{e_dire}
and the reverse estimate \eqref{e_reve} are not improvable,
up to multiplicative constants.

\subsection{Hardy--Bloch spaces}
Given a gauge function $\we$,
the weighted Hardy--Bloch space $\bwp(\bd)$, $0<p<\infty$,
consists of those $f\in\hol(\bd)$ for which
\begin{equation}\label{e_hb_def}
\|f\|_{\bwp(\bd)} = |f(0)| + \sup_{0<r<1} \frac{M_p(\rad f, r)(1-r)}{\we(1-r)}
<\infty.
\end{equation}

Clearly, we have $\bw(\bd) \subset \bwp(\bd)$, $0<p<\infty$.
So, it is interesting that estimate \eqref{e_dire} is sharp
for $f\in\bw(\bd)$ and holds for all $f\in\bwp(\bd)$
with $p\ge 2$.
Namely, we have the following proposition
that was proved in \cite{GPP06} for $\we\equiv 1$.

\begin{prop}\label{p_means_hb}
Let $2\le p<\infty$ and let $f\in\bwp(\bd)$.
Then
\begin{equation}\label{e_dire_hb}
M_p(f, r)
\le C\|f\|_{\bw(\bd)} \ff^{\frac{1}{2}}(1-r), \quad 0<r<1,
\end{equation}
for a constant $C>0$.
\end{prop}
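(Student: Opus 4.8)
The plan is to reuse the area-integral domination already exploited in the proof of Proposition~\ref{p_means}. By \cite[Theorem~3.1]{AB88} we have again, for $f\in\hol(\bd)$ and $0<r<1$,
\[
M_p(f, r) \le C|f(0)| + C\left(\int_{\spd}\left(\int_0^1 r^2|\rad f(rt\za)|^2(1-t)\,dt\right)^{\frac{p}{2}}d\sid(\za)\right)^{\frac{1}{p}}.
\]
Raising this to the power $p$, the boundary term is harmless since $|f(0)|\le\|f\|_{\bwp(\bd)}$ and $\ff\ge 1$, so everything reduces to estimating the spherical integral of the inner quantity. The decisive difference from Proposition~\ref{p_means} is that we no longer have a pointwise bound on $|\rad f|$; instead we only control its $L^p$-means through the defining property \eqref{e_hb_def}.

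The key step, and the only place where the hypothesis $p\ge 2$ is used, is to interchange the two integrations by Minkowski's integral inequality in $L^{p/2}(\sid)$, which is legitimate precisely because $p/2\ge 1$. Writing the inner integral as $\int_0^1 g_t\,dt$ with $g_t(\za)=r^2(1-t)|\rad f(rt\za)|^2\ge 0$, I would obtain
\[
\left(\int_{\spd}\left(\int_0^1 g_t\,dt\right)^{\frac{p}{2}}d\sid\right)^{\frac{2}{p}} \le \int_0^1 r^2(1-t)\,M_p(\rad f, rt)^2\,dt.
\]
This is the heart of the argument: it converts the unmanageable mixed norm into a one-dimensional radial integral of the $L^p$-means of $\rad f$, which are exactly what the $\bwp(\bd)$-norm governs.

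From here the computation parallels Proposition~\ref{p_means}. Using \eqref{e_hb_def}, $M_p(\rad f, rt)\le \|f\|_{\bwp(\bd)}\,\we(1-rt)/(1-rt)$, so after the substitution $s=rt$ the radial integral is dominated by
\[
\|f\|_{\bwp(\bd)}^2\int_0^r (r-s)\frac{\we^2(1-s)}{(1-s)^2}\,ds \le \|f\|_{\bwp(\bd)}^2\int_{1-r}^1\frac{\we^2(u)}{u}\,du \le \|f\|_{\bwp(\bd)}^2\,\ff(1-r),
\]
where I used $r-s\le 1-s$ and then the change of variable $u=1-s$. Collecting the two contributions and taking $p$-th roots yields the asserted bound $M_p(f, r)\le C\|f\|_{\bwp(\bd)}\,\ff^{\frac{1}{2}}(1-r)$.

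The main obstacle is conceptual rather than computational: recognizing that Minkowski's inequality is the correct tool and that it forces $p\ge 2$. For $p<2$ the functional $\|\cdot\|_{L^{p/2}(\sid)}$ is no longer a norm and the interchange fails, which is fully consistent with the sharpness discussion following Proposition~\ref{p_means}: in that range the estimate genuinely requires the stronger $\bw(\bd)$-membership rather than mere $\bwp(\bd)$-membership. Everything else mirrors the earlier proof and is routine.
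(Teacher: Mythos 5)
Your proof is correct. The downstream computation (change of variables $s=rt$, the bound $r-s\le 1-s$, and the estimate of $\int_0^r \we^2(1-s)(1-s)^{-1}\,ds$ by $\ff(1-r)$ via \eqref{e_hb_def}) coincides with the paper's, but you reach the key intermediate inequality by a genuinely different route. The paper simply cites the mixed-norm estimate \eqref{e_means_HL} as a known result (Hardy--Littlewood \cite{HL37} for $d=1$, extended to $d\ge 2$ by integration by slices), and the constraint $p\ge 2$ is hidden inside that citation. You instead \emph{derive} \eqref{e_means_HL}: starting from the same area-integral estimate of \cite[Theorem~3.1]{AB88} that was already used for Proposition~\ref{p_means}, you interchange the spherical and radial integrations by Minkowski's integral inequality in $L^{p/2}(\sid)$, observing that
\[
\left(\int_{\spd}\left(\int_0^1 g_t\,dt\right)^{\frac{p}{2}}d\sid\right)^{\frac{2}{p}}
\le \int_0^1 \bigl\|g_t\bigr\|_{L^{p/2}(\sid)}\,dt
= \int_0^1 r^2(1-t)\,M_p^2(\rad f, rt)\,dt .
\]
This buys two things: the argument becomes self-contained relative to the tools already invoked in the paper (no separate appeal to \cite{HL37} or to slice integration in higher dimension), and it makes completely transparent where $p\ge 2$ enters --- Minkowski's inequality needs the exponent $p/2\ge 1$, and for $p<2$ the interchange fails, consistent with the sharpness of \eqref{e_dire} on $\bw(\bd)$. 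One further point in your favor: your conclusion carries the norm $\|f\|_{\bwp(\bd)}$, which is the correct quantity here since $f$ is only assumed to lie in $\bwp(\bd)$; the appearance of $\|f\|_{\bw(\bd)}$ in \eqref{e_dire_hb} and in the paper's own proof is evidently a slip of notation, and your reading is the intended one.
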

\begin{proof}
For $f\in\hol(\bd)$ and $0<r<1$, we have
\begin{equation}\label{e_means_HL}
M_p(f, r) \le C|f(0)|
+ C \left(\int_0^1 \left( \int_{\spd} |\rad f(r t\za)|^p \,d\sid(\za)\right)^{\frac{2}{p}}
r^2(1-t)\, dt \right)^{\frac{1}{2}}
\end{equation}
for a constant $C>0$ (see \cite{HL37} for $d=1$; integration by slices gives the result for $d\ge 2$).
Now, we argue as in the proof of Proposition~\ref{p_means}. Namely,
for $f\in\bw(\bd)$, the defining property \eqref{e_hb_def} guarantees that
\[
\begin{split}
\int_0^1 \left( \int_{\spd} |\rad f(r t\za)|^p \,d\sid(\za)\right)^{\frac{2}{p}}
r^2(1-t)\, dt
&= \int_0^r M_p^2 (\rad f, t) (r-t)\, dt
\\
&\le \|f\|^2_{\bw(\bd)} \int_0^r \frac{\we^2(1-t)}{1-t}\, dt
\\
&\le\|f\|^2_{\bw(\bd)}\ff(1-r).
\end{split}
\]
Since $|f(0)|\le \|f\|_{\bw(\bd)}$,
the proof is finished.
\end{proof}

\section{Applications of Theorem~\ref{t_reve}}\label{s_appl}
In this section, we assume that
$\we: (0, 1]\to (0, +\infty)$ is an increasing function.

\subsection{Carleson measures}
Given a space $X\subset \hol(\bd)$ and $0<q<\infty$, recall that
a positive Borel measure $\mu$ on $\bd$ is called
$q$-Carleson for $X$ if
$X \subset L^q(\bd, \mu)$.

Suppose that $\we(t)/t^{1-\er}$ decreases for some $\er>0$.
A direct application of Theorem~\ref{t_reve} gives the following result:

\begin{cory}\label{c_carl}
Let $0<q<\infty$ and let $\mu$
be a $q$-Carleson measure for $\bw(\bd)$. Then
\[
\int_{\bd} \ff^{\frac{q}{2}} (1-|z|) \, d\mu(z) < \infty.
\]
\end{cory}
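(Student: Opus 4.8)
The plan is to derive the corollary from Theorem~\ref{t_reve} by integrating the reverse estimate against $\mu$, after first upgrading the set-theoretic inclusion $\bw(\bd)\subset L^q(\bd,\mu)$ to a quantitative bound via the closed graph theorem.

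First I would apply Theorem~\ref{t_reve} with $p=q/2$, obtaining a constant $\tau=\tau_{d,q/2,\we}>0$ and a family $\{F_y\}_{0\le y\le 1}\subset\bw(\bd)$ with $\|F_y\|_{\bw(\bd)}\le 1$ and
\[
\int_0^1 |F_y(z)|^{q}\,dy \ge \tau\,\ff^{q/2}(1-|z|),\quad z\in\bd.
\]
Integrating this inequality in $z$ against $d\mu$ and invoking Tonelli's theorem (the integrand $(y,z)\mapsto|F_y(z)|^q$ is nonnegative and jointly measurable, being continuous in $z$ and, for fixed $z$, the absolutely convergent Rademacher series defining $F_y(z)$ is measurable in $y$) gives
\[
\tau\int_{\bd}\ff^{q/2}(1-|z|)\,d\mu(z) \le \int_0^1\left(\int_{\bd}|F_y(z)|^q\,d\mu(z)\right)dy.
\]
Thus it suffices to bound the inner integrals $\int_{\bd}|F_y|^q\,d\mu$ uniformly in $y$.

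The uniform bound is where the Carleson hypothesis enters, and this is the main point of the argument. The inclusion map $\iota\colon\bw(\bd)\to L^q(\bd,\mu)$ is linear and, by assumption, well defined. Its graph is closed: if $f_n\to f$ in $\bw(\bd)$ and $f_n\to g$ in $L^q(\bd,\mu)$, then $\bw$-convergence forces locally uniform convergence $f_n\to f$ on $\bd$ (the norm \eqref{e_bw_def} controls $|f(z)-f(0)|$ on each ball $|z|\le\rho<1$ through $\int_0^\rho \we(1-t)/(1-t)\,dt<\infty$), while $L^q$-convergence yields $\mu$-a.e.\ convergence along a subsequence, so $f=g$ $\mu$-a.e. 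Since $\bw(\bd)$ is a Banach space and $L^q(\bd,\mu)$ is a complete metrizable topological vector space, the closed graph theorem makes $\iota$ continuous, and continuity of the linear inclusion yields a constant $C>0$ with
\[
\int_{\bd}|f|^q\,d\mu \le C\,\|f\|_{\bw(\bd)}^q,\quad f\in\bw(\bd).
\]

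Finally I would combine the displays. Because $\|F_y\|_{\bw(\bd)}\le 1$, the uniform bound gives $\int_{\bd}|F_y|^q\,d\mu\le C$ for every $y$, so the outer integral over $[0,1]$ is at most $C$; comparing with the lower bound yields $\int_{\bd}\ff^{q/2}(1-|z|)\,d\mu(z)\le C/\tau<\infty$, which is the assertion. The only delicate step is the closed graph argument, and within it the case $0<q<1$, where $L^q(\bd,\mu)$ must be treated not as a normed space but as a $q$-Banach space carrying the $q$-homogeneous quasi-norm $f\mapsto\int|f|^q\,d\mu$; the homogeneity $\int|\lambda f|^q\,d\mu=|\lambda|^q\int|f|^q\,d\mu$ is exactly what converts continuity at the origin into the displayed power-$q$ bound (for $q\ge 1$ the same bound is just the operator-norm estimate raised to the power $q$).
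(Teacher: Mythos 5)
Your proposal is correct and is essentially the paper's intended argument: the paper gives no written proof, calling the corollary ``a direct application of Theorem~\ref{t_reve}'', and that application is exactly what you carry out --- take $p=q/2$, integrate the reverse estimate \eqref{e_reve} against $\mu$ via Tonelli, and bound $\int_{\bd}|F_y|^q\,d\mu$ uniformly using $\|F_y\|_{\bw(\bd)}\le 1$ together with the closed-graph upgrade of the set-theoretic inclusion $\bw(\bd)\subset L^q(\bd,\mu)$ to a continuous embedding. Your handling of the quasi-Banach case $0<q<1$ and of the closed-graph hypothesis (norm convergence in $\bw(\bd)$ implies locally uniform convergence, hence agreement $\mu$-a.e.\ with the $L^q$ limit) correctly fills in the details the paper leaves implicit.
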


If $\mu$ is a radial measure, then the above corollary is reversible.
Moreover, the corresponding result holds for all spaces $\bwp(\bd)$, $p\ge 2$.

\begin{prop}\label{p_carl}
Let $0<q<\infty$ and let $\rho$ be a positive measure on $[0,1)$.
Then the following properties are equivalent:
\begin{align}
\int_0^1 \int_{\spd} |f(r\za)|^q\, d\sid(\za)\, d\rho(r)
&< \infty
\quad\textrm{for all\ } f\in\bwp(\bd),\ p\ge 2; \label{e_carl_rad1p}
\\
\int_0^1 \int_{\spd} |f(r\za)|^q\, d\sid(\za)\, d\rho(r)
&< \infty
\quad\textrm{for all\ } f\in\bw(\bd); \label{e_carl_rad1}
\\
\int_0^1 \ff^{\frac{q}{2}}(1-r) \, d\rho(r)
&< \infty. \label{e_carl_rad2}
\end{align}
\end{prop}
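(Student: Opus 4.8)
The plan is to establish the cycle of implications \eqref{e_carl_rad2}$\Rightarrow$\eqref{e_carl_rad1p}$\Rightarrow$\eqref{e_carl_rad1}$\Rightarrow$\eqref{e_carl_rad2}. The implication \eqref{e_carl_rad1p}$\Rightarrow$\eqref{e_carl_rad1} is immediate from the inclusion $\bw(\bd)\subset\bwp(\bd)$: any integrability that holds for every $f\in\bwp(\bd)$ holds, in particular, for every $f\in\bw(\bd)$. Thus the two substantive tasks are the direct implication \eqref{e_carl_rad2}$\Rightarrow$\eqref{e_carl_rad1p}, resting on Proposition~\ref{p_means_hb}, and the reverse implication \eqref{e_carl_rad1}$\Rightarrow$\eqref{e_carl_rad2}, resting on Theorem~\ref{t_reve}.

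For \eqref{e_carl_rad2}$\Rightarrow$\eqref{e_carl_rad1p} I would rewrite the inner sphere integral as $M_q(f,r)^q$ and control it by the direct estimate. For $f\in\bwp(\bd)$ with $p\ge 2$ and $q\le p$, the monotonicity $M_q(f,r)\le M_p(f,r)$ on the probability space $(\spd,\sid)$ combined with Proposition~\ref{p_means_hb} gives $M_q(f,r)\le C\|f\|_{\bwp(\bd)}\ff^{\frac12}(1-r)$; taking $q$-th powers and integrating against $\rho$ yields
\[
\int_0^1\!\int_{\spd}|f(r\za)|^q\,d\sid(\za)\,d\rho(r)\le C^q\|f\|_{\bwp(\bd)}^q\int_0^1\ff^{\frac q2}(1-r)\,d\rho(r)<\infty
\]
by \eqref{e_carl_rad2}. (For $f\in\bw(\bd)$ one may replace Proposition~\ref{p_means_hb} by Proposition~\ref{p_means}, which holds for every exponent and so removes the restriction $q\le p$.)

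The substantive implication is \eqref{e_carl_rad1}$\Rightarrow$\eqref{e_carl_rad2}. Here I would apply Theorem~\ref{t_reve} with its parameter $p$ set equal to $q/2$, producing a constant $\tau=\tau_{d,q/2,\we}>0$ and functions $F_y\in\bw(\bd)$, $0\le y\le 1$, with $\|F_y\|_{\bw(\bd)}\le 1$ and
\[
\int_0^1|F_y(z)|^{q}\,dy\ge\tau\,\ff^{\frac q2}(1-|z|),\qquad z\in\bd.
\]
Integrating this over $z=r\za$ against $d\sid(\za)\,d\rho(r)$ and interchanging the order of integration by Tonelli's theorem, and using that $\ff^{\frac q2}(1-r)$ is independent of $\za$ with $\sid(\spd)=1$, one obtains
\[
\tau\int_0^1\ff^{\frac q2}(1-r)\,d\rho(r)\le\int_0^1 A(y)\,dy,\qquad A(y)=\int_0^1\!\int_{\spd}|F_y(r\za)|^q\,d\sid(\za)\,d\rho(r).
\]

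The main obstacle is that \eqref{e_carl_rad1} only guarantees $A(y)<\infty$ for each individual $y$, which does not bound $\int_0^1 A(y)\,dy$. To bridge this gap I would upgrade \eqref{e_carl_rad1} to a uniform estimate by the closed graph theorem: \eqref{e_carl_rad1} asserts precisely that the inclusion $\bw(\bd)\hookrightarrow L^q(\bd,d\sid\,d\rho)$ is well defined, and this map has closed graph because norm convergence in $\bw(\bd)$ forces uniform convergence on compact subsets of $\bd$ (hence pointwise convergence), while convergence in $L^q$ forces convergence $\sid\times\rho$-a.e.\ along a subsequence, so the two limits coincide. Since $L^q$ is an $F$-space for every $0<q<\infty$, the theorem applies and yields a constant $C$ with $A(y)\le C^q\|F_y\|_{\bw(\bd)}^q\le C^q$ for all $y$. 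Therefore $\tau\int_0^1\ff^{\frac q2}(1-r)\,d\rho(r)\le C^q<\infty$, which is \eqref{e_carl_rad2}. I expect this passage from pointwise to uniform integrability, rather than the Fubini bookkeeping, to be the crux.
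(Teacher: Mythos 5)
Your argument follows the paper's proof in all essentials: the trivial inclusion for \eqref{e_carl_rad1p}$\Rightarrow$\eqref{e_carl_rad1}, Proposition~\ref{p_means_hb} for the direct implication, and Theorem~\ref{t_reve} for the reverse one. Your treatment of \eqref{e_carl_rad1}$\Rightarrow$\eqref{e_carl_rad2} is correct, and the closed-graph step you single out as the crux is exactly the content that the paper compresses into the citation of Corollary~\ref{c_carl} (itself ``a direct application of Theorem~\ref{t_reve}''): one must upgrade the pointwise finiteness in \eqref{e_carl_rad1} to a uniform bound $\int|F_y|^q\,d(\sid\times\rho)\le C$ before integrating in $y$, and the closed graph theorem does this legitimately, since $\bw(\bd)$ is a Banach space, $L^q(\sid\times\rho)$ is an $F$-space for every $0<q<\infty$, and convergence in $\bw(\bd)$ implies locally uniform convergence. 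On this implication your write-up is, if anything, more complete than the paper's.

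The weak point is the one you flagged yourself: your proof of \eqref{e_carl_rad2}$\Rightarrow$\eqref{e_carl_rad1p} covers only $q\le p$, and the parenthetical repair (Proposition~\ref{p_means} for $f\in\bw(\bd)$) yields \eqref{e_carl_rad2}$\Rightarrow$\eqref{e_carl_rad1}, not \eqref{e_carl_rad1p}. Hence for $q>2$ your cycle never establishes the integrability for $f\in\bwp(\bd)$ with $2\le p<q$, and this cannot be fixed if \eqref{e_carl_rad1p} is read with a universal quantifier over $p\ge2$: take $d=1$, $\we\equiv1$, $q=4$ and $f(z)=(1-z)^{-1/2}$; then $M_2(\rad f,r)\asymp(1-r)^{-1}$, so $f\in\bwp(B_1)$ for $p=2$, while $M_4^4(f,r)\asymp(1-r)^{-1}$, so that for $\rho=\sum_{n\ge1}n^{-4}\delta_{1-2^{-n}}$ one gets
\[
\int_0^1\ff^{2}(1-r)\,d\rho(r)\asymp\sum_{n\ge1}n^{-2}<\infty
\qquad\text{but}\qquad
\int_0^1 M_4^4(f,r)\,d\rho(r)\asymp\sum_{n\ge1}n^{-4}\,2^{n}=\infty,
\]
i.e.\ \eqref{e_carl_rad2} holds while \eqref{e_carl_rad1p} fails for the space with $p=2$. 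So \eqref{e_carl_rad1p} must be understood with $p\ge\max(2,q)$ (equivalently, as an existential statement over $p$), and under that reading your restriction $q\le p$ is exactly what is needed and your proof closes. For what it is worth, the paper's own proof shares this defect: its final sentence claims only \eqref{e_carl_rad2}$\Rightarrow$\eqref{e_carl_rad1} via Proposition~\ref{p_means_hb}, so the cycle is never closed at \eqref{e_carl_rad1p} there either; your version, which at least makes the restriction explicit, is the more careful of the two.
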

\begin{proof}
The implication \eqref{e_carl_rad1p}$\Rightarrow$\eqref{e_carl_rad1}
is trivial, because $\bw(\bd) \subset \bwp(\bd)$.
Next, \eqref{e_carl_rad1} implies \eqref{e_carl_rad2}
by Corollary~\ref{c_carl}.
Finally, Proposition~\ref{p_means_hb} guarantees that
\eqref{e_carl_rad2} implies \eqref{e_carl_rad1}.
\end{proof}

\subsection{Hyperbolic derivatives}\label{ss_hyd}
Let $\ii_\we(0+)<\infty$.
As observed in \cite{DouIEOT}, the conclusion of Theorem~\ref{t_aan_smith}(i)
remains true if the restriction
\[
\frac{|\ph^\prime(z)| (1-|z|)}{1-|\ph(z)|} \le \we(1-|z|), \quad z\in B_1,
\]
is replaced by the following weaker assumption:
\[
\frac{|\ph^\prime(z)| (1-|z|)}{1-|\ph(z)|}
\Om(1-|\ph(z)|) \le \we(1-|z|), \quad z\in B_1,
\]
where $\Om: (0, 1]\to (0, +\infty)$ is a bounded measurable function such that
\[
\ii_\Om = \int_0^1 \frac{\Om^2(t)}{t}\, dt =\infty.
\]
To obtain quantitative results of the above type, we apply Theorem~\ref{t_reve}.
Also, we make weaker assumptions about $\ph$.

So, suppose that $\Om$ is increasing and $\Om(t)/t^{1-\er}$ is decreasing for some $\er>0$. Put
\[
\ff_\Om(x) = 1 + \int_x^1 \frac{\Om^2(t)}{t}\, dt, \quad 0<x<1.
\]

\begin{cory}\label{c_ieot}
Let $\ph: B_1\to B_1$ be a holomorphic mapping and let $1\le p < \infty$.
Assume that $\ii_\we(0+)<\infty$, $\ii_\Om =\infty$ and
\begin{equation}\label{e_ieot}
(1-r) \left(\int_{\sph_1} \left( \frac{|\ph^\prime(r\za)| }{1-|\ph(r\za)|}\Om(1-|\ph(r\za)|) \right)^{2p}
\, d\si_1(\za) \right)^{\frac{1}{2p}}
 \le \we(1-r)
\end{equation}
for $0<r<1$. Then
\[
\sup_{0<r<1} \int_{\sph_1} \ff_\Om^{p}(1-|\ph(r\za)|)\, d\si_1(\za) <\infty.
\]
In particular, $|\ph^*|<1$ $\si_1$-a.e.
\end{cory}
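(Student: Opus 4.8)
The plan is to apply Theorem~\ref{t_reve} with the gauge function $\Om$ (which is admissible, since $\Om$ increases and $\Om(t)/t^{1-\er}$ decreases) in dimension $d=1$ and exponent $p$, obtaining functions $F_{y}\in\mathcal{B}^\Om(B_1)$, $0\le y\le 1$, with $\|F_{y}\|_{\mathcal{B}^\Om(B_1)}\le 1$ and
\[
\int_0^1 |F_{y}(w)|^{2p}\,dy \ge \tau\,\ff_\Om^{p}(1-|w|),\quad w\in B_1.
\]
Substituting $w=\ph(r\za)$, integrating in $\za$ over $\sph_1$, and applying Fubini's theorem, the left-hand side becomes $\int_0^1 M_{2p}^{2p}(F_{y}\circ\ph,r)\,dy$, so the corollary will follow once I bound $M_{2p}(F_{y}\circ\ph,r)$ from above uniformly in $y$ and $r$.

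The key step is to show that $F_{y}\circ\ph$ lies in the Hardy--Bloch space $\mathcal{B}^\we_{2p}(B_1)$ with norm bounded independently of $y$. First I would convert the pointwise $\mathcal{B}^\Om$-bound into $|F_{y}'(w)|\le C_\Om\,\Om(1-|w|)/(1-|w|)$ for all $w\in B_1$: for $|w|\ge\frac12$ this is immediate from \eqref{e_bw_def} (absorbing the factor $|w|^{-1}\le 2$), while for $|w|<\frac12$ one uses that $F_{y}'$ is bounded on $\{|w|\le\frac12\}$ by a single constant, both $|F_{y}'|$ and $\Om(1-|w|)/(1-|w|)$ being pinched between positive constants there. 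Since $\rad(F_{y}\circ\ph)(z)=z\,F_{y}'(\ph(z))\,\ph'(z)$ and $|z|\le 1$, this bound together with hypothesis \eqref{e_ieot} yields
\[
\frac{M_{2p}(\rad(F_{y}\circ\ph),r)\,(1-r)}{\we(1-r)}\le C_\Om,\quad 0<r<1,
\]
uniformly in $y$, while $|F_{y}(\ph(0))|$ is bounded by the same derivative estimate (integrating $F_{y}'$ from $0$ to $\ph(0)$) together with $|F_{y}(0)|\le 1$. Hence $\|F_{y}\circ\ph\|_{\mathcal{B}^\we_{2p}(B_1)}\le C$.

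Now, because $p\ge 1$ gives $2p\ge 2$, Proposition~\ref{p_means_hb} applies with exponent $2p$ and yields $M_{2p}(F_{y}\circ\ph,r)\le C\,\ff_\we^{1/2}(1-r)$. The hypothesis $\ii_\we(0+)<\infty$ makes $\ff_\we$ a bounded function, so $M_{2p}(F_{y}\circ\ph,r)\le C'$ uniformly in $y$ and $r$. Combining the lower bound from Theorem~\ref{t_reve} with this upper bound through Fubini's theorem gives
\[
\tau\int_{\sph_1}\ff_\Om^{p}(1-|\ph(r\za)|)\,d\si_1(\za)\le \int_0^1 M_{2p}^{2p}(F_{y}\circ\ph,r)\,dy\le (C')^{2p},
\]
uniformly in $r$, which is the asserted supremum bound. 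Finally, since $\ph$ is a bounded holomorphic map its radial limit $\ph^*$ exists $\si_1$-a.e.; if $|\ph^*|=1$ on a set of positive measure, then $\ff_\Om(1-|\ph(r\za)|)\to\ff_\Om(0+)=\infty$ there as $r\to 1$ (using $\ii_\Om=\infty$), and Fatou's lemma contradicts the finite supremum, forcing $|\ph^*|<1$ $\si_1$-a.e.

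The main obstacle I anticipate is the second paragraph: recognizing that the averaged hypothesis \eqref{e_ieot} only controls an $L^{2p}$-mean of the hyperbolic derivative, so $F_{y}\circ\ph$ need not belong to the Bloch space $\bw(B_1)$ but only to the larger Hardy--Bloch space $\mathcal{B}^\we_{2p}(B_1)$; this is precisely why one must invoke the mean growth estimate of Proposition~\ref{p_means_hb} (valid for exponents $\ge 2$) rather than Proposition~\ref{p_means}. Handling the $|w|$-factor arising from the $d=1$ radial derivative is a minor but necessary technical point.
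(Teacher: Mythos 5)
Your proposal is correct and takes essentially the same route as the paper's proof: apply Theorem~\ref{t_reve} with $\Om$ in dimension $d=1$, bound $|(F_y\circ\ph)'|$ by the chain rule and the $\mathcal{B}^\Om$-norm bound, use hypothesis \eqref{e_ieot} together with $\ii_\we(0+)<\infty$ to get a uniform bound on $M_{2p}(F_y\circ\ph,r)$, and finish with Fubini's theorem and Fatou's lemma. The only cosmetic difference is that you invoke Proposition~\ref{p_means_hb} as a packaged statement where the paper applies the underlying Hardy--Littlewood estimate \eqref{e_means_HL} directly, and you treat the $|w|$-factor in the one-dimensional radial derivative more carefully than the paper does.
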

\begin{proof}
Let the constant $\tau = \tau_{1, p, \Om}>0$
and the functions $F_y\in\bloch^\Om(B_1)$, $0\le y \le 1$,
be those provided by Theorem~\ref{t_reve}
for $d=1$ and for $\Om$ in place of $\we$.

Since $\|F_y\|_{\bloch^\Om(B_1)} \le 1$, we have
\[
|(F_y \circ\ph)^\prime (z)|\le |F_y^\prime(\ph(z))| |\ph^\prime(z)|
\le \frac{|\ph^\prime(z)|}{1-|\ph(z)|} \Om(1-|\ph(z)|), \quad z\in B_1.
\]
So, using \eqref{e_ieot} and the hypothesis $\ii_{\we}(0+)< \infty$,
we obtain
\[
\int_{0}^{1} M_{2p}^2 ((F_y\circ\ph)^\prime, t) (1-t)\, dt
\le \int_0^1 \frac{\we^2(1-t)}{1-t}\, dt < \infty.
\]
We further observe that $|F_y\circ \ph(0)| \le C_\ph \|F_y\|_{\bloch^\Om(B_1)} \le C$,
and so estimate \eqref{e_means_HL} guarantees that
\[
\int_{\sph_1} |F_y \circ \ph(r\za)|^{2p}\, d\si_1(\za)
\le C, \quad 0\le y\le 1,\ 0<r<1,
\]
for a universal constant $C>0$.
Hence, applying Fubini's theorem and Theorem~\ref{t_reve},
we obtain
\[
C \ge \int_{\sph_1}
\int_0^1 |F_y \circ \ph(r\za)|^{2p}\, dy \, d\si_1(\za)
\ge \int_{\sph_1} \ff_\Om^{p}(1-|\ph(r\za)|)\, d\si_1(\za),
\]
as required.
\end{proof}

\bibliographystyle{amsplain}

\end{document}